\title{Knot cobordism and Lee's Perturbation of Khovanov homology}
\author{Zipei Zhuang}
\newtheorem{theorem}{Theorem}%
\newtheorem{corollary}{Corollary}%
\begin{document}
	
	\maketitle

	\begin{abstract}
		For a connected cobordism $S$ between two knots $K_1, K_2$ in $S^3$, we establish an inequality involving the number of local maxima, the genus of $S$, and the torsion orders of $Kh_t(K_1), Kh_t(K_2)$, where $Kh_t$ denotes Lee's perturbation of Khovanov homology. This shows that the torsion order gives a lower bound for the band-unlinking number.
	\end{abstract}

	If $K_0, K_1$ are knots in $S^3$, a cobordism from $K_0$ to $K_1$ is a smooth compact oriented surface $F$ embedded in $S^3 \times [0,1]$ with boundary $K_0 \times \{0\} \cup K_1 \times \{1\}$. If $F$ is an annulus, it is called a concordance. Any cobordism $F \subset S^3 \times [0,1]$ is called \emph{ribbon } if the projection from $S^3 \times [0,1]$ to [0,1] restricts to a Morse function on $F$ with only index $0$ and 1 critical points. A knot $K_0$ is \emph{ribbon concordant} to $K_1$ if there exists a ribbon concordance from $K_0$ to $K_1$. 
	
	In the past few years there has been several works on applications of knot homology theories to knot cobordisms. For Khovanov homology $Kh$, it is shown \cite{MR4041014} that a ribbon concordance between two links $(F, K_0, K_1)$ induces an injective map $\phi_F : Kh(K_0) \longrightarrow Kh(K_1)$. Furthermore, Sarkar \cite{MR4092319} showed that if $F$ has $k$ saddles, then $(2x)^dKh_t(K_0) \cong (2x)^dKh_t(K_1)$, where $Kh_t$ is Lee's perturbation of Khovanov homology. Similar results are obtained in the context of knot Floer homology(see \cite{MR4024565} \cite{MR4186142}).
	
	The main result of this paper is the following:
	\begin{theorem} \label{thm1}
	   Suppose that $(F,K_0, K_1)$ is a  connected knot cobordism with $m$ births, $b$ saddles and $M$ deaths. Denote by $\bar{F}$ the cobordism from $K_1$ to $K_0$ obtained by horizontally mirroring $\bar{F}$. Then up to a sign
	   \begin{equation}
	     (2x)^M \cdot \phi_{\bar{F} }  \circ \phi_F 
	     = (2x)^{b-m} \cdot id_{Kh_t(K_0)}	
 \end{equation}
	\end{theorem}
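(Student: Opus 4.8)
The plan is to rewrite $\phi_{\bar F}\circ\phi_F$ as the map of a single cobordism $K_0\to K_0$, to simplify that cobordism up to boundary-fixing isotopy, and to read off the answer from two standard facts about $Kh_t$: it is functorial under cobordisms up to sign, and attaching a handle (a genus-one tube) multiplies the induced map by $2x$ — the latter following from the neck-cutting relation in Lee's theory, and insensitive to how the handle is embedded, since capping the compressing annulus returns the product cobordism decorated by a single dot. By functoriality up to sign, $\phi_{\bar F}\circ\phi_F=\pm\,\phi_{\bar F\circ F}$, where $\bar F\circ F\colon K_0\to K_0$ stacks $F$ under its horizontal mirror, so it suffices to prove $(2x)^M\cdot\phi_{\bar F\circ F}=\pm(2x)^{b-m}\cdot id_{Kh_t(K_0)}$. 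Since $F$ is connected, every birth circle of $F$ is eventually absorbed by a saddle and its mirror is eventually split off again, so $\bar F\circ F$ is connected; with $\chi(\bar F\circ F)=2\chi(F)$ it has genus $b-m-M$.

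\emph{Normal form.} A birth of $F$ followed by the saddle absorbing its circle is boundary-fixing isotopic to a product, so pushing each birth up to its absorbing saddle lets one assume $m=0$; this lowers $b$ by the original $m$ and leaves $b-m$ fixed, so it is harmless (and the general case is recovered at the end). With $m=0$, put $F$ in the form ``$b$ saddles, reaching $K_1\sqcup U^{\sqcup M}$, then $M$ deaths''. Then $\bar F\circ F$ performs the $b$ saddles, the $M$ counits killing the trivial circles $u_1,\dots,u_M$, immediately the $M$ units re-creating them, and finally the $b$ mirrored saddles; equivalently, $\bar F\circ F$ is the double $\bar G\circ G$ of the saddle-only cobordism $G\colon K_0\to K_1\sqcup U^{\sqcup M}$, except that each of the $M$ product cylinders on $u_i$ has been replaced by a ``counit-then-unit'' bubble.

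\emph{Evaluation.} The double $\bar G\circ G$ is handled by induction on the number of saddles: a saddle followed by its mirror is a product cobordism carrying exactly one handle (forced by its Euler characteristic), and that handle can be slid down past the remaining saddles to the $K_0$-end, lowering the saddle count by one; hence $\bar G\circ G$ is boundary-fixing isotopic to $K_0\times[0,1]$ with $b$ handles, so $\phi_{\bar G\circ G}=(2x)^{b}\cdot id$. For the bubbles: when $u_i$ is split off trivially by a saddle of $G$, a direct computation with the Frobenius structure $x^2=1$ shows that ``the saddle creating $u_i$, then the bubble, then the mirrored saddle merging $u_i$ back'' collapses to the identity; consolidating these triples (each commutes with the saddles not touching $u_i$) identifies $\phi_{\bar F\circ F}$ with the double of a saddle-only cobordism having $M$ fewer saddles, namely $(2x)^{b-M}\cdot id$, and therefore $(2x)^M\cdot\phi_{\bar F\circ F}=(2x)^{b}\cdot id=(2x)^{b-m}\cdot id$. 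In general the remaining bubbles are treated by the same mechanism, the point being that multiplying by $(2x)^M$ is exactly what restores the clean answer.

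\emph{Main obstacle.} The algebraic computations (the birth reduction, $\phi_{\mathrm{handle}}=2x$, the collapse of a split/bubble/merge triple) are routine once set up. The delicate part is geometric and combinatorial: realising the handle-slides of the evaluation step as isotopies of \emph{embedded} surfaces rel boundary rather than merely abstractly, and running the consolidation of the $M$ bubbles in the case where the corresponding death of $F$ does not simply ``split off a trivial circle and kill it'' — there the death cannot be removed from $F$, the triple does not collapse on the nose, and the factor $(2x)^M$ is genuinely required to absorb the discrepancy. Making this bookkeeping precise is the heart of the argument.
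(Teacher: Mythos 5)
Your reduction to $m=0$ is where the argument breaks. The claim that a birth of $F$ followed by the saddle absorbing its circle is boundary-fixing isotopic to a product is false: if it were, then by induction every ribbon concordance (which is exactly $m$ births followed by $m$ fusion saddles) would be isotopic rel boundary to a product, so its two ends would be isotopic knots and every ribbon disk would be unknotted; but the unknot is ribbon concordant to $K\#(-K)$ for any $K$. A birth and its absorbing fusion saddle cancel \emph{abstractly}, but the fusion band may be knotted and may pass through the spanning disks of other components, and the ``absorb the disk along the band'' move does not fix the upper boundary, so it is not an isotopy rel $\partial F$. The births genuinely cannot be discarded; the paper keeps them, isolates the sub-cobordism $R=F_2\circ F_1$ consisting of the $m$ births and their $m$ fusion saddles, and invokes the ribbon-concordance theorem of Levine--Zemke in the form $\phi_{\bar R}\circ\phi_R=\mathrm{id}$, applied to the innermost layer of the double. (Equivalently: when a fusion saddle and its mirror are cancelled by the paper's relation (2), the newborn circle's double becomes a closed $2$-sphere, and of the two dotted terms the dotted sphere evaluates to $\epsilon(x)=1$ while the undotted one evaluates to $\epsilon(1)=0$, so such a pair contributes a factor of $1$ rather than $2x$. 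This is precisely why fusion-saddle pairs must be treated separately from the other $b-m$ saddle pairs, a distinction your outline erases.)

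The second problem is the one you flag yourself: your ``triple collapse'' handles a death only when it is immediately preceded by a saddle splitting off the dying circle trivially, and you defer the general case as ``bookkeeping'' that is ``the heart of the argument.'' It is the heart, and it has a one-line solution you did not supply. In $\bar F\circ F$ each death of $F$ is immediately followed by its mirrored birth, i.e.\ the cylinder over the dying unknot $U_i'$ has been compressed into a cap and a cup. The neck-cutting relation (the paper's property (1)), applied to the cobordism $S$ in which $U_i'$ instead persists as a product cylinder, gives $\phi_S=\phi_{\dot S_1}+\phi_{\dot S_2}=2x\cdot\phi_{\bar F\circ F}$ after migrating the dots on the connected surface $\bar F\circ F$, \emph{regardless} of how $U_i'$ was created; iterating yields $\phi_S=(2x)^M\phi_{\bar F\circ F}$, which is exactly where the factor $(2x)^M$ in the statement comes from. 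Your special case is the one in which this factor happens to be cancellable against a saddle pair. Your treatment of the remaining $b-m$ saddle pairs (each doubling to a handle worth $2x$) is sound and matches the paper's property (2), though the paper applies the relation algebraically and thereby avoids the embedded handle-slide isotopies you correctly identify as delicate. With the birth step replaced by the ribbon-concordance input and the death step replaced by neck-cutting, your outline becomes the paper's proof.
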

	This is analogous to the main result of \cite{MR4186142}, which is for knot Floer homology.
	
	In particular, if $F$ is a ribbon concordance, then $M=b-m=0$, so $\phi_{\bar{F} }  \circ \phi_F= id_{Kh_t(K_0)}$, $\phi_F$ is injective.
	
	Let  $\mathcal{T}(K)$ be the torsion part of $Kh_t(K)$, and $xo(K)$ be the \emph{extortion order} of $K$, i.e. the minimumal $n$ such that $x^n\mathcal{T}(K)=0$. The same argument as in \cite{MR4186142}   gives the following corollary:

	\begin{corollary}  \label{c1}
	    \begin{equation}
	    	xo(K_0) \leq max\{M, xo(K_1)\}+ 2g(F)
	    \end{equation}
    where $g(F)$ is the genus of $F$.
	\end{corollary}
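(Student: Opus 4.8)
The plan is to obtain Corollary \ref{c1} directly from Theorem \ref{thm1}, by evaluating the displayed identity on a torsion class and bookkeeping the exponents. The first step is to express $b-m$ topologically. Since $F$ is connected, oriented, of genus $g(F)$, with boundary the two knots $K_0\sqcup K_1$, it has $\chi(F)=2-2g(F)-2=-2g(F)$; on the other hand, counting the critical points of the height function gives $\chi(F)=m-b+M$. Comparing the two expressions yields $b-m=M+2g(F)$, so Theorem \ref{thm1} reads, up to sign,
\[
  (2x)^{M}\cdot\phi_{\bar F}\circ\phi_F \;=\; (2x)^{M+2g(F)}\cdot id_{Kh_t(K_0)} .
\]

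Next I would fix an arbitrary $\alpha\in\mathcal{T}(K_0)$ and put $N:=\max\{M,\,xo(K_1)\}$. The cobordism maps $\phi_F$ and $\phi_{\bar F}$ are morphisms of $\mathbb{Z}[x]$-modules; in particular $\phi_F$ carries torsion to torsion, so $\phi_F(\alpha)\in\mathcal{T}(K_1)$ and hence $x^{xo(K_1)}\phi_F(\alpha)=0$, giving $x^{N}\phi_F(\alpha)=0$ since $N\ge xo(K_1)$. Applying the identity above to $\alpha$ and then multiplying both sides by $x^{N-M}$ (legitimate because $N\ge M$), the left-hand side becomes $2^{M}\,\phi_{\bar F}\bigl(x^{N}\phi_F(\alpha)\bigr)=0$, using that $\phi_{\bar F}$ commutes with $x$, while the right-hand side becomes $\pm\,2^{M+2g(F)}\,x^{N+2g(F)}\alpha$. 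After cancelling the power of $2$ we get $x^{N+2g(F)}\alpha=0$; as $\alpha$ was arbitrary, $x^{N+2g(F)}\mathcal{T}(K_0)=0$, i.e. $xo(K_0)\le N+2g(F)=\max\{M,\,xo(K_1)\}+2g(F)$, which is the assertion.

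The routine ingredients here are the functoriality and $\mathbb{Z}[x]$-linearity of $\phi_F$ and $\phi_{\bar F}$, together with the observation that the undetermined overall sign in Theorem \ref{thm1} is harmless for a vanishing statement. The one place that genuinely requires care — and that I expect to be the main obstacle — is the cancellation of $2^{M+2g(F)}$ from the relation $2^{M+2g(F)}x^{N+2g(F)}\alpha=0$: over a coefficient ring in which $2$ is invertible this is automatic, whereas over $\mathbb{Z}$ one must verify that multiplication by $2$ is injective on the $x$-power-torsion submodule of $Kh_t(K_0)$, which is precisely the point to be imported from, or argued as in, \cite{MR4186142}.
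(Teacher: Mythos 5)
Your argument is correct and follows essentially the same route as the paper: rewrite $b-m=M+2g(F)$ via the Euler characteristic, apply the identity of Theorem \ref{thm1} to a torsion class, use $x$-linearity of the cobordism maps and the invertibility of $2$ to conclude $x^{\max\{M,xo(K_1)\}+2g(F)}\mathcal{T}(K_0)=0$. If anything, you are slightly more careful than the paper in stating explicitly that $\alpha$ ranges over $\mathcal{T}(K_0)$ and that $\phi_F$ preserves torsion, both of which the paper's proof uses implicitly; the only cosmetic slip is writing $\mathbb{Z}[x]$-linearity where the coefficients are the field $\mathbb{F}$ (with $x^2=t$), which does not affect the argument.
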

	
	The \emph{band-unlinking number} $ul_b(K)$ of a knot $K$ is the minimum number of oriented band moves necessary to reduce $K$ to an unlink.  Corollary \ref{c1} furthermore gives
	
	\begin{corollary} \label{c2}
		\begin{equation}
			xo(K) \leq ul_b(K)
		\end{equation}
	\end{corollary}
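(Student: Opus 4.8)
The plan is to realize an optimal band-unlinking sequence for $K$ as a connected knot cobordism and then apply Corollary \ref{c1}. Set $n = ul_b(K)$ and fix a sequence of $n$ oriented band moves carrying $K$ to some $u$-component unlink $U$. The trace of this sequence is the surface obtained from the annulus $K \times [0,1]$ by attaching $n$ bands ($1$-handles); it is a connected surface $T$ with $\partial T = K \sqcup U$, realizing a cobordism from $K$ to $U$ with $n$ saddles and no births or deaths. Since $U$ is an unlink, each component bounds a disk, and I would cap off $u-1$ of its components to produce a cobordism $F$ from $K$ to the unknot $O$. Attaching disks along boundary circles preserves connectedness, so $F$ is a connected knot cobordism with $m = 0$ births, $b = n$ saddles, and $M = u-1$ deaths.

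Next I would compute the genus of $F$. Building $F$ as above, each $1$-handle drops the Euler characteristic by $1$ and each cap raises it by $1$, so
\[
\chi(F) = \chi(K \times [0,1]) - n + (u-1) = u - 1 - n ;
\]
since a connected oriented surface with two boundary circles satisfies $\chi(F) = 2 - 2g(F) - 2 = -2g(F)$, we get $2g(F) = n - u + 1$. This is a nonnegative even integer: in any band-unlinking sequence the number of components changes by $\pm 1$ at each step, which forces both $n \geq u - 1$ and $n \equiv u - 1 \pmod 2$.

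Applying Corollary \ref{c1} to $F$, and using that the unknot has torsion-free $Kh_t$ so that $xo(O) = 0$, we obtain
\[
xo(K) \leq \max\{M, xo(O)\} + 2g(F) = (u-1) + (n - u + 1) = n = ul_b(K),
\]
which is the claimed inequality; the case $n = 0$ is trivial, since then $K$ is itself the unknot.

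I do not anticipate a real obstacle, as the substance already lies in Corollary \ref{c1}. The two points requiring care are that the cobordism assembled from the band moves together with the capping disks is genuinely connected — so that Corollary \ref{c1}, which ultimately rests on Theorem \ref{thm1} for connected cobordisms, applies — and the Euler-characteristic bookkeeping that converts the counts of saddles and deaths into the genus term. The only mild cleverness is to dispose of the surplus unlink components by capping them off (deaths) rather than by fusing them with additional saddles; this is precisely what makes the final arithmetic collapse to exactly $n$.
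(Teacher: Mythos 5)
Your proposal is correct and follows essentially the same route as the paper: realize the optimal band-unlinking sequence as a connected cobordism from $K$ to the resulting unlink, cap off all but one component with deaths to land on an unknot, and apply Corollary \ref{c1} with the Euler-characteristic identity $2g(F)=b-m-M$ so the bound collapses to $ul_b(K)$. Your write-up is somewhat more careful than the paper's about connectedness and the genus bookkeeping, but the argument is the same.
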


	\section{Lee's perturbation of Khovanov homology}
	 We will work over a field  $\mathbb{F}$ in which $2$ is invertible. Let \textbf{A} be the 2-dimensional Frobenius algebra over $\mathbb{F}[t]$ with basis \{1,x\}, with the multiplication map $m$, comultiplication map $\triangle$, and counit map $\epsilon$ defined as follows:
	 \begin{equation}
	 	\begin{aligned}
	 	&1 \otimes 1 \stackrel{m}{\longrightarrow} 1   
	 	\quad \quad  1 \otimes x \stackrel{m}{\longrightarrow} x    \\
	 	&x \otimes 1 \stackrel{m}{\longrightarrow} x \quad \quad 
	 	x \otimes x \stackrel{m}{\longrightarrow} t\\
	 	&	1 \stackrel{\triangle}{\longrightarrow} 1 \otimes x + x \otimes 1  \\
	 	&x \stackrel{\triangle}{\longrightarrow} x \otimes x + t \cdot 1 \otimes 1  \\
        & 1 \stackrel{\epsilon}{\longrightarrow} 0   \quad \quad x \stackrel{\epsilon}{\longrightarrow} 1
    \end{aligned}
	 \end{equation}

  Applying this Frobenius algebra to the Kauffman cube of resolutions of a knot diagram, we get a chain complex over $\mathbb{F}[t]$, whose homology is called Lee's perturbation of Khovanov homology \cite{MR2174270}. In particular, setting $t=0$, we get Khovanov's original knot homology; taking $t=1$, it becomes Lee homology \cite{MR2173845}.

	 \begin{figure}[h]
	 	\def\svgwidth{\columnwidth}
\begingroup%
  \makeatletter%
  \providecommand\color[2][]{%
    \errmessage{(Inkscape) Color is used for the text in Inkscape, but the package 'color.sty' is not loaded}%
    \renewcommand\color[2][]{}%
  }%
  \providecommand\transparent[1]{%
    \errmessage{(Inkscape) Transparency is used (non-zero) for the text in Inkscape, but the package 'transparent.sty' is not loaded}%
    \renewcommand\transparent[1]{}%
  }%
  \providecommand\rotatebox[2]{#2}%
  \newcommand*\fsize{\dimexpr\f@size pt\relax}%
  \newcommand*\lineheight[1]{\fontsize{\fsize}{#1\fsize}\selectfont}%
  \ifx\svgwidth\undefined%
    \setlength{\unitlength}{841.88976378bp}%
    \ifx\svgscale\undefined%
      \relax%
    \else%
      \setlength{\unitlength}{\unitlength * \real{\svgscale}}%
    \fi%
  \else%
    \setlength{\unitlength}{\svgwidth}%
  \fi%
  \global\let\svgwidth\undefined%
  \global\let\svgscale\undefined%
  \makeatother%
  \begin{picture}(1,0.37037037)%
    \lineheight{1}%
    \setlength\tabcolsep{0pt}%
    \put(0,0){\includegraphics[width=\unitlength,page=1]{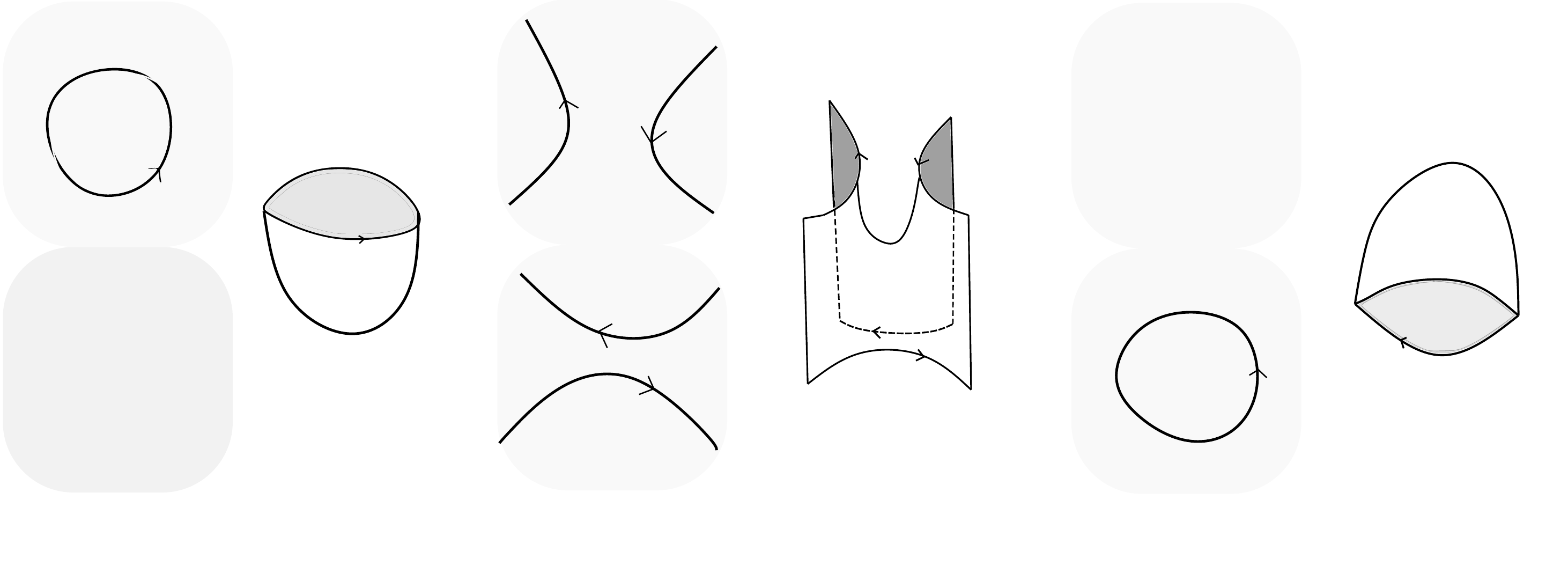}}%
    \put(0.00085426,0.030721){\color[rgb]{0,0,0}\makebox(0,0)[lt]{\lineheight{1.25}\smash{\begin{tabular}[t]{l}0-Morse move(birth)\end{tabular}}}}%
    \put(0.28277021,0.02931516){\color[rgb]{0,0,0}\makebox(0,0)[lt]{\lineheight{1.25}\smash{\begin{tabular}[t]{l}1-Morse move(saddle move)\end{tabular}}}}%
    \put(0.69620311,0.02613355){\color[rgb]{0,0,0}\makebox(0,0)[lt]{\lineheight{1.25}\smash{\begin{tabular}[t]{l}2-Morse move(death)\end{tabular}}}}%
  \end{picture}%
\endgroup%

	 	\caption{The elementary cobordisms} 
	 	\label{graph1}
	 \end{figure}

  A knot cobordism $(F, K_0, K_1)$ induces a map (see \cite{MR1740682} \cite{MR2113903}  \cite{MR2174270})
  \begin{equation}
  	\phi_F : Kh_t(K_0) \longrightarrow Kh_t(K_1)
  \end{equation}
by decomposing $F$  into a sequence of elementary cobordisms, each represented by a Reidemeister move or a Morse move of the link diagrams. The Reidemeister moves induce isomorphisms on homology. The 0-Morse move (corresonding to a birth), 1-Morse move (saddle), 2-Morse move (death) induce maps on $Kh_t$ according to the unit map, the multiplication/comultiplication map, the counit map, respectively. The map induced by $F$ is the composition of the maps induced by these elementary cobordisms. Furthermore, one can consider dotted cobordisms, where a dot induces a multiplication by $x$ on homology. Such assignment is well-defined and functorial, \textbf{up to a sign}.

	The cobordism maps satisfy the following local properties(see \cite{MR4092319}):
	
	(1) Suppose $F$ has an 1-handle $h$. Let $F'$ be obtained from $F$ by surgery along $h$, and let $\dot{F_1}$ and $\dot{F_2}$ be obtained by adding a dot to $F'$ at either of the feet of $h$. Then (see Fig.\ref{graph2}) up to a sign,
	\begin{equation}\label{p1}
		\phi_{F}= \phi_{\dot{F_1}} +\phi_{\dot{F_2}}
	\end{equation}

\begin{figure}[h]
	\def\svgwidth{\columnwidth}
	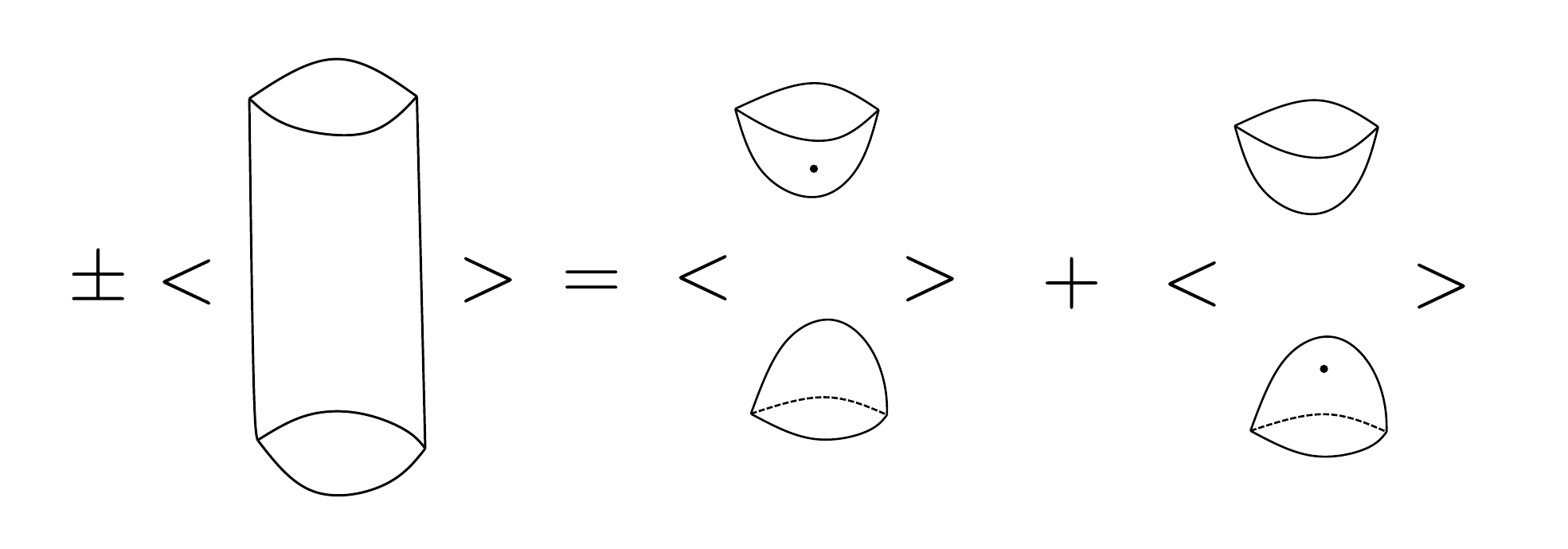
	\caption{} 
	\label{graph2}
\end{figure}

	(2) If $F$ is a cobordism which can be decomposed into a sequence $F_1,...F_n$ in which $F_i, F_{i+1}$ are two adjacent saddle moves reverse to each other, let $S$ be the cobordism obtained from $F$ by deleting $F_i,F_{i+1}$, and let $\dot{S}_1, \dot{S}_2$ be the cobordism obtained by adding a dot on one of the two sides of $S$(see Fig.\ref{graph3}). Then  up to a sign 
	\begin{equation}\label{p2}
		\phi_F =\phi_{\dot{S}_1}+ \phi_{\dot{S}_1}
	\end{equation}

	\begin{figure}[h]
		\def\svgwidth{\columnwidth}
		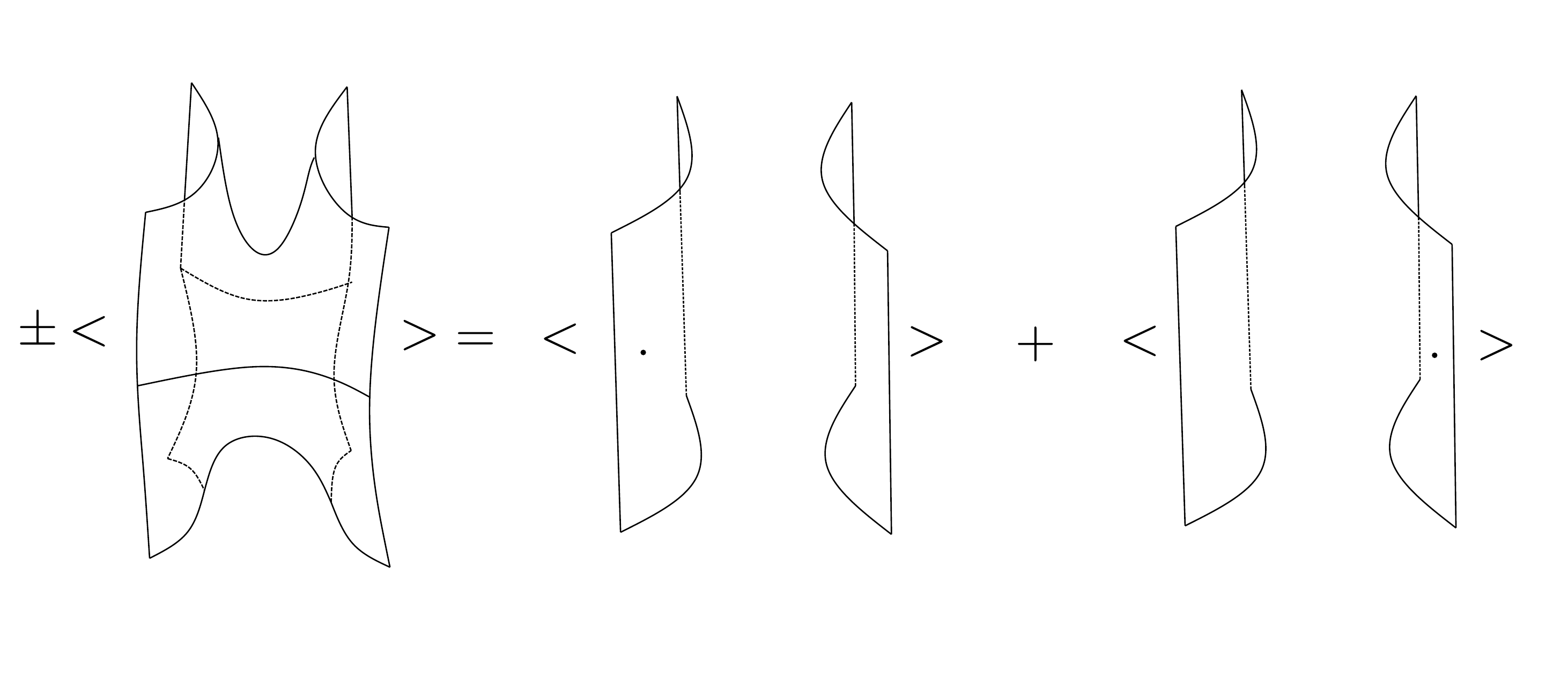
		\caption{} 
		\label{graph3}
	\end{figure}

	\section{The cobordism distance}
	
		\emph{Proof of Theorem \ref{thm1}}.\begin{proof}
	
	 Let $F$ be a connected oriented knot (dotted) cobordism from $K_0$ to $K_1$ in $S^3 \times [0,1]$ with $m$ births, $b$ saddles and $M$ deaths. We can rearrange the critical points of $F$ so that $F$ can be decomposed into a sequence of elementary cobordisms ordered as below(we omit the cobordisms of Reidemeister moves since they induce isomorphisms on homology):
	
	(1) $m$ births, which add $m$ unknots $U_1,...U_m$;
	
	(2) $m$ fusion saddle moves, which merge $U_1,...,U_m$ with $K_0$;
	
	(3) $b-m$ additional saddle moves;
	
	(4) $M$ deaths, deleting unknots $U'_1,...,U'_M$. \\
So the cobordism $S$ decomposes as $K_1 \stackrel{F_1}{\longrightarrow} K_1 \sqcup U^m \stackrel{F_2}{\longrightarrow} K' \stackrel{F_3}{\longrightarrow} K_2 \sqcup U^M \stackrel{F_4}{\longrightarrow} K_2$,
where the piece $F_i$ comes from item (i) above, and $U^m$ denotes the $m$- component planar unlink.
The cobordism $\bar{F}$ decomposes as $K_2 \stackrel{\bar{F_4}}{\longrightarrow} K_2 \sqcup U^M \stackrel{\bar{F_3}}{\longrightarrow} K' \stackrel{\bar{F_2}}{\longrightarrow} K_1 \sqcup U^m \stackrel{\bar{F_1}}{\longrightarrow} K_1$ . Then $\phi_{\bar{F}} \circ \phi_F$ is the composition of 8 maps. The composition of the fourth and fifth step is to delete the $M$ unknots via $M$ deaths, then add them back with $M$ births. Let $S$ be the cobordism obtained by deleting the forth and fifth step. Then $\bar{F} \circ F$ is obtained from $S$ by sugerying along the $M$ tubes. By   \ref{p1}  , we have
	\begin{equation}\label{eq1}
		\phi_S = (2x)^M \phi_{\bar{F} \circ F}
	\end{equation}
since $\bar{F} \circ F$ is connected.

Let $T$ be the cobordism obtained from $S$ by deleting the third and sixth steps, i.e. $T$ is the composition $K_1  \stackrel{F_1}{\longrightarrow} K_1 \sqcup U^m \stackrel{F_2}{\longrightarrow} K' \stackrel{\bar{F_2}}{\longrightarrow} K_1 \sqcup U^m \stackrel{\bar{F_1}}{\longrightarrow} K_1$. By   \ref{p2}  ,we have
\begin{equation}\label{eq2}
	\phi_S = (2x)^{b-m} \cdot \phi_{T}
\end{equation}
	Note that $F_2 \circ F_1$ is a ribbon concordance, by  \cite{MR4041014} Theorem 1 , 
	
	\begin{equation}\label{eq3}
		\phi_T = id_{Kh_t(K_0)}
	\end{equation}

	Theorem \ref{thm1} now follows from equation \ref{eq1} \ref{eq2} \ref{eq3}.
		
\end{proof}

	\emph{Proof of Corollary \ref{c1}} 
	\begin{proof}
	Let $F$ be an oriented knot cobordism from $K_0$ to $K_1$ with $m$ births, $b$ saddles, $M$ deaths. For $l \geq 0, \alpha \in Kh_t(K_0)$, by Theorem \ref{thm1} we have
	\begin{equation}
		\phi_{\bar{F} \circ F} \left( (2x)^{l+M} \cdot \alpha \right)  =(2x)^M \phi_{\bar{F} \circ F}\left( (2x)^{l} \cdot \alpha \right) =(2x)^{l+b-m} \cdot \alpha
	\end{equation}
	This shows that if $l \geq max\{0, xo(K_1)-M\}$, then \begin{equation}
		(2x)^{l+b-m} \cdot \alpha =\phi_{\bar{F}} \circ \phi_F \left( (2x)^{l+M} \cdot \alpha\right) =0
	\end{equation}
	
	Therefore $xo(K_0) \leq b-m+ max\{0, xo(K_1) -M\} = max\{M, xo(K_1)\} +2g$ (recall that we work over a base field in which 2 is invertible) since $2g(F)=-\chi(F)=b-m-M$.

	\end{proof}

	\emph{Proof of Corollary \ref{c2}}
	\begin{proof}
		By definition, after attaching $ul_b(K)$ oriented bands to $K$, we obtain an unlink $L$. Suppose $L$ has $M$ components. By performing $M-1$ deaths, we obtain a cobordism $S$ from $K$ to an unknot $U$ with $0$ births, $ul_B(K)$ saddles and $M-1$ deaths. Then by Corollary \ref{c1} we have
		\begin{equation}
			xo(K) \leq max \{M-1, 0\}+2g(S)= M-1+ul_b(K)-M+1 =ul_b(K).
		\end{equation}
	\end{proof}
	
	The following corollary generalizes \cite{MR4092319} Theorem 1.1:
	\begin{corollary}
		If $F$ is a knot concordance from $K_0$ to $K_1$ with $b$ saddles. Then
		\begin{equation}
			(2x)^bKh_t(K_0) \cong (2x)^b Kh_t(K_1)
		\end{equation}
	\end{corollary}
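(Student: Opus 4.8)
The plan is to derive this as a quick consequence of Theorem~\ref{thm1}, applied to both $F$ and its horizontal mirror $\bar F$, together with the fact that a concordance is an annulus. First I would do the Euler characteristic bookkeeping: present $F$ with $m$ births, $b$ saddles and $M$ deaths, so that $\chi(F)=m-b+M$; since $F$ is an annulus, $\chi(F)=0$, whence $b=m+M$, i.e. $b-m=M$ and $b-M=m$. Note also that a concordance is connected, so Theorem~\ref{thm1} does apply.

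Next I would invoke Theorem~\ref{thm1} twice. Applied to $F$ it gives, up to sign,
\begin{equation}
(2x)^M\cdot\phi_{\bar F}\circ\phi_F=(2x)^{b-m}\cdot\mathrm{id}_{Kh_t(K_0)}=(2x)^M\cdot\mathrm{id}_{Kh_t(K_0)}.
\end{equation}
Applied to $\bar F$ — a connected concordance from $K_1$ to $K_0$ with $M$ births, $b$ saddles, $m$ deaths, whose own mirror is $F$ — it gives, up to sign,
\begin{equation}
(2x)^m\cdot\phi_F\circ\phi_{\bar F}=(2x)^{b-M}\cdot\mathrm{id}_{Kh_t(K_1)}=(2x)^m\cdot\mathrm{id}_{Kh_t(K_1)}.
\end{equation}

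Then I would pass to the submodules $(2x)^bKh_t(K_0)$ and $(2x)^bKh_t(K_1)$. Since $\phi_F$ and $\phi_{\bar F}$ are $\mathbb F[t]$-linear and commute with the dot (i.e. with multiplication by $x$), $\phi_F$ carries $(2x)^bKh_t(K_0)$ into $(2x)^bKh_t(K_1)$ and $\phi_{\bar F}$ carries it back. Multiplying the first displayed identity by $(2x)^m$ and using $m+M=b$ yields $(2x)^b\cdot\phi_{\bar F}\circ\phi_F(\alpha)=\pm(2x)^b\alpha$ for every $\alpha\in Kh_t(K_0)$, that is, $\phi_{\bar F}\circ\phi_F=\pm\mathrm{id}$ on $(2x)^bKh_t(K_0)$; symmetrically, multiplying the second identity by $(2x)^M$ shows $\phi_F\circ\phi_{\bar F}=\pm\mathrm{id}$ on $(2x)^bKh_t(K_1)$. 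Hence the restrictions of $\phi_F$ and $\phi_{\bar F}$ are mutually inverse isomorphisms $(2x)^bKh_t(K_0)\cong(2x)^bKh_t(K_1)$ of $\mathbb F[t]$-modules.

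I do not expect any substantive obstacle here: all of the real content is already packaged in Theorem~\ref{thm1}. The only points needing care are formal — tracking (and then discarding) the irrelevant overall signs, reading ``$(2x)^bKh_t(K)$'' as the submodule $(2x)^b\cdot Kh_t(K)$, verifying that $\phi_F$ genuinely commutes with multiplication by $2x$ (which is functoriality of dotted cobordism maps), and correctly reading off the Morse data of $\bar F$ so that the second application of Theorem~\ref{thm1} is legitimate.
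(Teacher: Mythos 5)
Your argument is correct and is essentially the paper's own proof: the same Euler characteristic computation $b-m=M$, the same application of Theorem~\ref{thm1} to conclude $\phi_{\bar F}\circ\phi_F=\pm\mathrm{id}$ on $(2x)^bKh_t(K_0)$, and the same symmetric step with the roles of $F$ and $\bar F$ reversed to get mutually inverse restrictions. The paper phrases the submodule step as $\phi_{\bar F}\circ\phi_F\bigl((2x)^b\alpha\bigr)=(2x)^M\phi_{\bar F}\circ\phi_F\bigl((2x)^{b-M}\alpha\bigr)$ rather than multiplying the identity by $(2x)^m$, but this is the same computation.
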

	\begin{proof}
		Suppose $F$ has $m$ births and $M$ deaths. Since $F$ is an annulus, $\chi(F)=m-b+M=0, M=b-m$. By Theorem \ref{thm1},
		\begin{equation}
			(2x)^M \phi_{\bar{F}} \circ \phi_{F} = (2x)^{b-m} \cdot Id_{Kh_t(K_0)} =(2x)^M Id_{Kh_t(K_0)}
		\end{equation}
	Note that $b \geq M$, for any $\alpha \in Kh_t(K_0)$,
	\begin{equation}
		\phi_{\bar{F}} \circ \phi_{F} \left((2x)^b \cdot \alpha \right)= (2x)^M  \phi_{\bar{F}} \circ \phi_{F} \left((2x)^{b-M} \cdot \alpha  \right)=(2x)^b \cdot \alpha     
	\end{equation}
	Therefore $\phi_{\bar{F}} \circ \phi_{F}$ is the identity map on $(2x)^b \cdot Kh_t(K_0)$.   Reversing the role of $F$ and $\bar{F}$, we get $\phi_{F} \circ \phi_{\bar{F}}$ is the identity map on $(2x)^b \cdot Kh_t(K_1)$. So $\phi_{F}: (2x)^b \cdot Kh_t(K_0) \longrightarrow (2x)^b \cdot Kh_t(K_1)$ is an isomorphism.

	\end{proof}

	\bibliography{ref}

\begin{thebibliography}{10}

\bibitem{MR4007369}
Akram Alishahi.
\newblock Unknotting number and {K}hovanov homology.
\newblock {\em Pacific J. Math.}, 301(1):15--29, 2019.

\bibitem{MR4194296}
Akram Alishahi and Eaman Eftekhary.
\newblock Knot {F}loer homology and the unknotting number.
\newblock {\em Geom. Topol.}, 24(5):2435--2469, 2020.

\bibitem{MR2174270}
Dror Bar-Natan.
\newblock Khovanov's homology for tangles and cobordisms.
\newblock {\em Geom. Topol.}, 9:1443--1499, 2005.

\bibitem{MR2113903}
Magnus Jacobsson.
\newblock An invariant of link cobordisms from {K}hovanov homology.
\newblock {\em Algebr. Geom. Topol.}, 4:1211--1251, 2004.

\bibitem{MR4186142}
Andr\'{a}s Juh\'{a}sz, Maggie Miller, and Ian Zemke.
\newblock Knot cobordisms, bridge index, and torsion in {F}loer homology.
\newblock {\em J. Topol.}, 13(4):1701--1724, 2020.

\bibitem{MR1740682}
Mikhail Khovanov.
\newblock A categorification of the {J}ones polynomial.
\newblock {\em Duke Math. J.}, 101(3):359--426, 2000.

\bibitem{MR2171235}
Mikhail Khovanov.
\newblock An invariant of tangle cobordisms.
\newblock {\em Trans. Amer. Math. Soc.}, 358(1):315--327, 2006.

\bibitem{MR2173845}
Eun~Soo Lee.
\newblock An endomorphism of the {K}hovanov invariant.
\newblock {\em Adv. Math.}, 197(2):554--586, 2005.

\bibitem{MR4041014}
Adam~Simon Levine and Ian Zemke.
\newblock Khovanov homology and ribbon concordances.
\newblock {\em Bull. Lond. Math. Soc.}, 51(6):1099--1103, 2019.

\bibitem{MR2729272}
Jacob Rasmussen.
\newblock Khovanov homology and the slice genus.
\newblock {\em Invent. Math.}, 182(2):419--447, 2010.

\bibitem{MR4092319}
Sucharit Sarkar.
\newblock Ribbon distance and {K}hovanov homology.
\newblock {\em Algebr. Geom. Topol.}, 20(2):1041--1058, 2020.

\bibitem{MR4024565}
Ian Zemke.
\newblock Knot {F}loer homology obstructs ribbon concordance.
\newblock {\em Ann. of Math. (2)}, 190(3):931--947, 2019.

\end{thebibliography}
\nocite{*}
\end{document}